\pdfoutput=1
\documentclass[preprint,ECP,NODS,nobera]{ejpecp}

\SHORTTITLE{The Pile Process on a Cycle}
\TITLE{The Pile Process on a Cycle}

\AUTHORS{Itai~Benjamini \and Eric~Shellef}

\SUBMITTED{June 3, 2026}
\VOLUME{0}
\YEAR{2026}
\PAPERNUM{0}

\ABSTRACT{Consider a finite particle system in which, at each step, one particle from a vertex with a lower neighboring vertex moves to a neighboring vertex of minimum height. For the cycle \(C_n\), started with \(n\) particles at one vertex and no particles elsewhere, we prove that the expected stabilization time is at most a constant multiple of \(n^3\). The deterministic transportation lower bound is of order \(n^2\). Simulations suggest that the true order should be near \(n^3\). Chat GPT was used to reincarnate notes from 2007.}

\makeatletter
\renewcommand{\PDFFIELDS}{%
\if@ejpecp@preprint
\else
\hypersetup{%
pdftitle={\@JOURNALA\ \@VOLUME\ (\@YEAR), \papernum@name\ \@PAPERNUM, \@doiprefix\@DOI}}%
\hypersetup{pdfproducer={\@JOURNAL\ Managing Editor https://\@URL}}%
\hypersetup{pdfauthor={Please see \@doiprefix\@DOI}}%
\fi
\hypersetup{pdfkeywords={}}%
\hypersetup{pdfcreator={LaTeX with ejpecp.cls \@nameuse{ver@ejpecp.cls}}}%
\hypersetup{pdfsubject={}}%
}
\renewcommand{\FIRSTPAGE}{%
\setcounter{page}{\@PAGESTART}%
\title{\small
\let\orig@thanks\thanks
\ifx\@EJP\undefined\else
\if@ejpecp@preprint
\phantom{\@EJPLOGO}\par
\else
\@EJPLOGO
\fi
\bigskip
\if@ejpecp@preprint
\null\null
\else
\centerline{\@JOURNALA\ \textbf{\@VOLUME} (\@YEAR),\
\papernum@name\ \@PAPERNUM, \,\@PAGESTART--\@PAGEEND.}%
\centerline{ISSN:\ \texttt{\href{\@FULLURL}{\@ISSN}} %
\ifx\@DOI\undefined\else
\ \printdoi
\fi}%
\fi
\bigskip
\bigskip
\bgroup
\@ifundefined{no@title@thanks@support}{%
\let\thanks\title@thanks
\let\@makefnmark\title@makefnmark
\let\support\thanks
}{}%
\Large\bfseries\@TITLE\par
\egroup
\fi
\ifx\@ECP\undefined
\else
\parbox[t]{9cm}{%
\if@ejpecp@preprint
\else
\@JOURNALA\ \textbf{\@VOLUME} (\@YEAR),
\papernum@name\ \@PAPERNUM, \@PAGESTART--\@PAGEEND.\\
\ifx\@DOI\undefined
\else
\ \printdoi
\fi \\
ISSN:\ \texttt{\href{\@FULLURL}{\@ISSN}}%
\fi
}%
\hfill
\if@ejpecp@preprint
\phantom{\@ECPLOGO}%
\else
\@ECPLOGO
\fi\\
\bigskip
\bigskip
\bgroup
\@ifundefined{no@title@thanks@support}{%
\let\thanks\title@thanks
\let\@makefnmark\title@makefnmark
\let\support\thanks
}{}%
\Large\@TITLE\par
\egroup
\fi
}
\date{%
\ifx\@DEDICATORY\undefined
\else
\noindent
\emph{\small\sffamily\@DEDICATORY}%
\fi}%
\maketitle\thispagestyle{empty}%
\begin{abstract}%
\noindent
\@ABSTRACT\par\vskip 1em\relax
{\footnotesize
\if@ejpecp@preprint
\mbox{}\par
\else
\noindent
Submitted to \@JOURNAL\ on \@SUBMITTED,
final version accepted on \@ACCEPTED.\par
\fi
\ifx\@ARXIVID\undefined
\else
\noindent
Supersedes
\texttt{\href{https://arXiv.org/abs/\@ARXIVID}{arXiv:\@ARXIVID}}.%
\fi\par
\ifx\@HALID\undefined
\else
\noindent
Supersedes
\texttt{\href{https://hal.archives-ouvertes.fr/\@HALID}{HAL:\@HALID}}.%
\fi\par
}
\end{abstract}
\smallskip
}
\makeatother

\newcommand{\E}{\mathbb E}
\newcommand{\N}{\mathbb N}

\newcommand{\diam}{\operatorname{diam}}

\begin{document}

\section{The process}\label{sec:process}

Let \(G=(V,E)\) be a finite connected graph. A state of the pile process is a function
\[
\eta:V\to \N\cup\{0\},
\]
where \(\eta(x)\) is the number of particles, or the \emph{height}, at \(x\). A vertex \(x\) is \emph{free} in the state \(\eta\) if it has a neighbor of smaller height. Write
\[
F(\eta)=\{x\in V: \exists y\sim x \text{ with } \eta(y)<\eta(x)\}.
\]
For \(x\in F(\eta)\), let
\[
M_x(\eta)=\Bigl\{y\sim x: \eta(y)=\min_{z\sim x}\eta(z)\Bigr\}
\]
be the set of neighbors of minimum height. If \(F(\eta)=\emptyset\), the chain stays fixed. Otherwise, one free particle is chosen uniformly among all particles at free vertices, and then it moves to a uniformly chosen vertex in \(M_x(\eta)\). Equivalently, if
\[
S(\eta)=\sum_{z\in F(\eta)}\eta(z),
\]
then
\[
P\bigl(\eta,\eta-\delta_x+\delta_y\bigr)
=\frac{\eta(x)}{S(\eta)}\frac1{|M_x(\eta)|},
\qquad x\in F(\eta),\ y\in M_x(\eta).
\]
All other transition probabilities are zero, except for the holding probability in states with \(F(\eta)=\emptyset\).

The chain is generally reducible. If the total number of particles is \(m\), then the configurations in which every height is either \(\lfloor m/|V|\rfloor\) or \(\lceil m/|V|\rceil\) form a closed communicating class. When \(m\) is a multiple of \(|V|\), this class consists of the single absorbing state in which every vertex has height \(m/|V|\).

In this note we take \(G=C_n\), the cycle with \(n\) vertices, and start with
\[
\eta_0=n\delta_o
\]
for some vertex \(o\). Thus there are \(n\) particles on \(n\) vertices, and the unique absorbing state has height one at every vertex. Let \(\tau_n\) be the hitting time of this state.

With this notation, the main result of the note is the cubic upper bound
\[
\E_o \tau_n \le C n^3
\]
for an absolute constant \(C<\infty\); this is Theorem~\ref{thm:upper}. We also prove the deterministic transport lower bound \(\E_o\tau_n=\Omega(n^2)\), stated as Theorem~\ref{thm:lower}. The simulations reported below suggest that the true order is cubic, and that the missing matching lower bound should come from the maximum-height-two phase.

The rest of the paper consists of the proof of the upper bound and a section containing the current lower bound and simulations. Sections~\ref{sec:geometry}--\ref{sec:upper} give the one-dimensional geometry of level sets, the moment estimates, and the potential-function drift argument. Section~\ref{sec:lower} proves the current quadratic lower bound and reports the Monte Carlo simulation. Section~\ref{sec:extensions} contains brief comments on possible extensions.

For \(H\ge 1\), define the level set
\[
L_H(t)=\{x\in C_n:\eta_t(x)\ge H\},
\qquad M_H(t)=|L_H(t)|,
\]
and let
\[
M_0(t)=|\{x\in C_n:\eta_t(x)=0\}|
\]
be the number of holes.

\section{Geometry of level sets}\label{sec:geometry}

We shall use the following elementary separation property of the process on a cycle.

\begin{lemma}[separation]
Before stabilization, let \(x\neq y\) be two vertices and put \(m=\min\{\eta(x),\eta(y)\}\). If \(m>1\), then the two vertices split the cycle into two arcs, one of which contains a hole, while the other has no vertex of height less than \(m-1\).
\end{lemma}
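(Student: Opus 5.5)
The plan is to reduce the lemma to one structural invariant of the level sets, \(L_H\), which is stable under single moves and holds at time zero. The invariant is:
\[
(\mathrm{I}_H)\qquad \text{there is an arc (possibly empty) } A_H\subseteq C_n \text{ with } L_H\subseteq A_H\subseteq L_{H-1},
\]
required for every \(H\ge 2\).

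I will first show that \((\mathrm{I}_m)\) implies the lemma. Before stabilization there are \(n\) particles on \(n\) vertices and not every height is one, so some vertex is a hole. Given \(x\neq y\) with \(\min\{\eta(x),\eta(y)\}=m>1\), both vertices lie in \(L_m\subseteq A_m\). Hence one of the two arcs joining \(x\) and \(y\) is contained in \(A_m\subseteq L_{m-1}\), so it has no vertex of height less than \(m-1\). Since \(m-1\ge 1\), \(A_m\) contains no hole. The hole therefore lies outside \(A_m\), in the other arc. This also shows \(A_m\neq C_n\).

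Next comes the initialization. For \(\eta_0=n\delta_o\), take \(A_H=\{o\}\) for \(2\le H\le n\) and \(A_H=\emptyset\) for \(H>n\).

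The main step is preservation. Suppose a particle moves from \(x\) to \(y\), where \(y\) is a minimal-height neighbor of \(x\). Write \(k=\eta(x)\) and \(h=\eta(y)+1\le k\). Only two level sets change: \(L_k\) loses \(x\), and \(L_h\) gains \(y\). If \(h=k\), both changes happen at the same level, and I apply the two updates below in turn.

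\emph{Gain at level \(h\).} For \((\mathrm{I}_h)\) I replace \(A_h\) by \(A_h\cup\{y\}\). This set is an arc because \(x\in L_h\subseteq A_h\) is adjacent to \(y\). It still lies in \(L_{h-1}\) because \(\eta(y)=h-1\). For \((\mathrm{I}_{h+1})\), only the set \(L_h\) on the right grows, which can only help.

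\emph{Loss at level \(k\).} For \((\mathrm{I}_k)\), \(L_k\) shrinks, which is harmless. The delicate case is \((\mathrm{I}_{k+1})\), because \(x\) may leave \(L_k\) while lying in \(A_{k+1}\).
\begin{itemize}
\item Suppose \(x\) is an interior point of \(A_{k+1}\). Then both neighbors of \(x\) lie in \(A_{k+1}\subseteq L_k\), so both have height at least \(k\). This contradicts \(\eta(y)<k\).
\item So \(x\) is an endpoint of \(A_{k+1}\), or all of \(A_{k+1}\). I remove it, leaving a smaller arc or the empty set. This new set still contains \(L_{k+1}\), since \(\eta(x)=k\) means \(x\notin L_{k+1}\).
\end{itemize}

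I expect this last point to be the main obstacle. It is exactly where the rule that particles move to a \emph{minimum} neighbor is used. It also explains why the statement needs \(m-1\) rather than \(m\): the level sets \(L_H\) themselves need not stay connected. For example, heights \(2,2,1\) can become \(2,1,2\). What survives is only an enclosing arc one level lower.

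Some bookkeeping remains. I will check the coincidences \(h=k\) and \(k=h+1\), which touch neighboring indices of the invariant. In each of these cases the two updates act on different arcs, or one is vacuous. I will also check that arcs never wrap around the whole cycle; as shown above, this is guaranteed before stabilization by the existence of a hole.
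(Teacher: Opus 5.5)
Your proof is correct, but it is organized differently from the paper's.

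The paper argues by minimal counterexample on pairs. It takes the first step at which the statement fails for some pair \(x,y\). It dismisses the cases where neither endpoint changes height or an endpoint is lowered. The remaining case is an endpoint rising from \(m-1\) to \(m\). There it passes to the neighbor that sent the particle: that neighbor had height at least \(m\), which gives the same violation one step earlier.

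You instead strengthen the statement to one global invariant, \(L_H\subseteq A_H\subseteq L_{H-1}\) with \(A_H\) an arc, for all \(H\ge 2\) at once, and check that a single move preserves it. The local facts doing the work are the same in both proofs:
\begin{itemize}
\item a vertex that rises to level \(h\) is adjacent to the source, which was already in \(L_h\);
\item a vertex of height \(k\) whose two neighbors both have height at least \(k\) cannot emit a particle.
\end{itemize}
What your formulation buys is that the choice of high arc is consistent across all pairs at once, and the bookkeeping of which arc changes under a move is explicit. The paper leaves implicit how the high arc and the hole-containing arc are tracked through the step. Your invariant also directly gives the consequence the paper relies on later when defining \(I_H\): for \(H\ge 2\), \(L_H\) lies in a single hole-free arc. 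The paper's argument is shorter but less transparent.

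One small correction to your commentary. The interior-point contradiction uses only \(\eta(y)<\eta(x)\), not that \(y\) is a minimum-height neighbor. Neither your argument nor the paper's uses the minimum rule, so the invariant, and hence the lemma, holds for any dynamics that moves particles strictly downhill.
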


\begin{proof}
There is always at least one hole before stabilization, since the total number of particles equals the total number of vertices. Initially the statement is true. Suppose it first fails after one transition. If neither endpoint changes height, then a vertex in the high arc must have moved from height \(m-1\) to height \(m-2\), which is impossible because all its neighbors in that arc have height at least \(m-1\). Lowering one endpoint cannot create the first violation, because it only lowers the threshold \(m-1\). Thus the only remaining possibility is that one endpoint, say \(x\), has just increased from \(m-1\) to \(m\). The particle that entered \(x\) came from a neighbor of height at least \(m\), and this gives the same violation one step earlier, a contradiction.
\end{proof}

When two vertices of height at least \(H\ge2\) are fixed, we call the arc containing no hole and no vertex below height \(H-1\) the corresponding \emph{high arc}.

\begin{corollary}\label{cor:diameter}
For \(H\ge2\), the diameter of \(L_H(t)\), measured along the high arc, satisfies
\[
\diam(L_H(t))<\frac{2n}{H}.
\]
\end{corollary}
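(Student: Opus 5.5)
Take two vertices \(x,y\in L_H(t)\) realizing the diameter of \(L_H(t)\) measured along the high arc, and apply the separation lemma to them with \(m=\min\{\eta_t(x),\eta_t(y)\}\ge H\ge 2\). The lemma gives an arc \(A\) from \(x\) to \(y\) (endpoints included) containing no hole and no vertex of height less than \(m-1\ge H-1\). By definition this is the high arc, and the diameter is \(d=|A|-1\), the number of edges of \(A\). The argument is then a particle count along \(A\), using that the total mass is exactly \(n\).

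\emph{Counting.} The endpoints \(x,y\) each carry at least \(H\) particles. The \(d-1\) interior vertices of \(A\) each carry at least \(H-1\). There is at least one hole before stabilization, and it lies off \(A\), but this only gives a strict inequality and no extra mass. So
\[
n\ \ge\ 2H+(d-1)(H-1).
\]
This yields \(d\le (n-2H)/(H-1)+1\), which is roughly \(n/(H-1)\). That is weaker than \(2n/H\) for large \(H\) only by the factor between \(H-1\) and \(H/2\). Since \(H-1\ge H/2\) exactly when \(H\ge 2\), we get
\[
d-1\ \le\ \frac{n-2H}{H-1}\ \le\ \frac{2(n-2H)}{H},
\]
hence
\[
d\ \le\ \frac{2n}{H}-4+1\ <\ \frac{2n}{H}.
\]
If \(L_H(t)\) has fewer than two vertices the diameter is \(0\) and there is nothing to prove. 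If \(x=y\) is forced, the same holds trivially.

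\emph{Main obstacle.} The step that needs care is identifying the diameter ``along the high arc'' with a single arc of the lemma. The choice of high arc depends on the chosen pair, so one must ensure that the extreme pair's high arc contains all of \(L_H(t)\) and that its length is the quantity to bound. I would handle this by fixing the two endpoints of \(L_H(t)\) that are farthest apart along the hole-free side. The lemma guarantees that the hole, which is unique in the sense of lying in the complementary arc, is not between them. Then the arc \(A\) from the lemma is exactly the span being measured.

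I would also double-check the edge case \(H=2\). There interior vertices may have height \(1\), and the bound \(n\ge 4+(d-1)\) gives \(d\le n-3<n\), which is fine.
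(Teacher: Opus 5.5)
Your proof is correct and follows the paper's argument. You apply the separation lemma to a pair in \(L_H(t)\), bound the mass on the high arc using that every vertex there has height at least \(H-1\), compare it with the total mass \(n\), and finish with \(H-1\ge H/2\). Your count is slightly sharper than the paper's \(d(H-1)<n\) because you credit the two endpoints with height at least \(H\), but the idea is the same.
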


\begin{proof}
If two vertices of \(L_H\) are at distance \(d\) along the high arc, then that arc contains at least \(d\) vertices of height at least \(H-1\). Since the total mass is \(n\), we have \(d(H-1)<n\). For \(H\ge2\), this gives \(d<2n/H\).
\end{proof}

\section{Moment of inertia}\label{sec:moment}

For a finite multiset \(A=\{a_1,\ldots,a_m\}\subset\mathbb R\), let
\[
\mu(A)=\frac1m\sum_{i=1}^m a_i
\]
and define its moment of inertia by
\[
I(A)=\sum_{i=1}^m (a_i-\mu(A))^2
=\sum_{i=1}^m a_i^2-m\mu(A)^2.
\]
This quantity is invariant under translation of all elements of \(A\). We use the convention \(I(\emptyset)=0\).

To define the moment of inertia of a level set on the cycle, cut the cycle at a hole and identify the remaining vertices with \(0,1,\ldots,n-1\). By the separation lemma, for \(H\ge2\) the set \(L_H\) lies in a single high arc, so this definition is independent of the chosen hole up to translation. We write
\[
I_H(t)=I\bigl(\{i:v_i\in L_H(t)\}\bigr),\qquad H\ge2.
\]
Similarly, by cutting at a vertex of height at least two, define
\[
I_0(t)=I\bigl(\{i:\eta_t(v_i)=0\}\bigr),
\]
the moment of inertia of the holes. If there is no vertex of height at least two, the process has already stabilized, and \(I_0\) is irrelevant.

We need three elementary estimates.

\begin{lemma}[two-sided movement]\label{lem:two-sided}
Let \(A=\{a_1,\ldots,a_m\}\), with \(m>1\), and suppose \(a_p<a_q\). Let \(A^-\) be obtained by replacing \(a_p\) by \(a_p-1\), and let \(A^+\) be obtained by replacing \(a_q\) by \(a_q+1\). If \(k=a_q-a_p\), then
\[
\frac12 I(A^-)+\frac12 I(A^+)
= I(A)+k+1-\frac1m .
\]
\end{lemma}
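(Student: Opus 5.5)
The plan is to use the second expression for the moment of inertia, \(I(A)=\sum_i a_i^2-\frac1m\bigl(\sum_i a_i\bigr)^2\), and to compute separately the exact change in \(I\) caused by each of the two single-coordinate moves. Averaging the two changes should then give the stated identity. No earlier result of the paper is needed; this is a direct algebraic identity.

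Write \(S=\sum_i a_i=m\mu(A)\). For \(A^-\), only \(a_p\) changes. The sum of squares changes by \((a_p-1)^2-a_p^2=-2a_p+1\). The total sum becomes \(S-1\), so the subtracted term changes from \(S^2/m\) to \((S-1)^2/m\), a decrease of \((2S-1)/m\). Hence
\[
I(A^-)-I(A)=-2a_p+1+2\mu(A)-\frac1m .
\]
For \(A^+\), the same computation with \(a_q\mapsto a_q+1\) and \(S\mapsto S+1\) should give
\[
I(A^+)-I(A)=2a_q+1-2\mu(A)-\frac1m .
\]

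Adding the two displays, the \(\mu(A)\) terms cancel, and the sum of the changes is \(2(a_q-a_p)+2-\frac2m=2k+2-\frac2m\). Dividing by two yields \(\frac12 I(A^-)+\frac12 I(A^+)=I(A)+k+1-\frac1m\), as claimed.

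There is no real obstacle here. The only points needing care are the cross term from the change of the mean, i.e. correctly expanding \((S\pm1)^2/m\), and keeping track of signs so that the \(2\mu(A)\) contributions cancel. I will also note that the hypothesis \(m>1\) is used only so that \(a_p\) and \(a_q\) are distinct elements, which makes the two moves legitimate alternatives. It is not needed for the algebra itself.
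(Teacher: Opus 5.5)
Your proof is correct and is essentially the paper's argument: the paper uses translation invariance to assume \(\mu(A)=0\) and then expands the definition, while you keep \(\mu(A)\) explicit and let the \(\pm2\mu(A)\) terms cancel, which is the same calculation. One small remark: the strict inequality \(a_p<a_q\) already forces \(p\neq q\) and hence \(m\ge2\), so the hypothesis \(m>1\) is redundant rather than doing the work you attribute to it.
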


\begin{proof}
By translation invariance, assume \(\mu(A)=0\). Then \(\mu(A^-)=-1/m\) and \(\mu(A^+)=1/m\). Expanding the definition of \(I\) gives
\[
\begin{aligned}
\frac12\{I(A^-)+I(A^+)\}
&=I(A)+\frac12\{-2a_p+1-1/m+2a_q+1-1/m\} \\
&=I(A)+k+1-1/m .
\end{aligned}
\]
\end{proof}

\begin{lemma}[moving one point]\label{lem:moving-one}
Let \(A\) be a finite multiset of diameter \(D\), and let \(A'\) be obtained from \(A\) by moving one point by distance one. Then
\[
|I(A')-I(A)|\le 2D+1.
\]
In particular, \(I(A)-I(A')\le 2D+1\).
\end{lemma}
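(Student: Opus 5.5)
We compute the change exactly, using the same expansion as in Lemma~\ref{lem:two-sided}. By translation invariance we may assume \(\mu(A)=0\). Let \(m=|A|\), and suppose the moved point is \(a_p\), replaced by \(a_p+s\) with \(s\in\{-1,+1\}\). Then \(\mu(A')=s/m\). From the formula \(I=\sum a_i^2-m\mu^2\) we get
\[
I(A')-I(A)=(a_p+s)^2-a_p^2-m\cdot\frac{1}{m^2}=2sa_p+1-\frac1m .
\]
This is the only computation needed. Everything now reduces to bounding \(|a_p|\), the distance of the moved point from the mean.

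Since \(\mu(A)=0\) is an average of the points of \(A\), it lies in the convex hull \([\min A,\max A]\), an interval of length \(D\). As \(a_p\) also lies in this interval, \(|a_p|\le D\). Hence
\[
-2D\le 2sa_p+1-\frac1m\le 2D+1,
\]
because \(0\le 1-1/m<1\). This gives \(|I(A')-I(A)|\le 2D+1\), and in particular \(I(A)-I(A')\le 2D+1\).

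Two degenerate cases need a check. If \(m=1\), then \(I\equiv0\) and the claim is trivial. The case \(A=\emptyset\) cannot occur, since a point is moved.

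The main subtlety is interpreting \(D\). The bound uses the diameter of \(A\) before the move. If \(D\) were instead taken to be the diameter of \(A'\), the same argument applied with the roles of \(A\) and \(A'\) reversed gives \(|I(A)-I(A')|\le 2D'+1\). Moreover, \(D\) and \(D'\) differ by at most one, so either reading suffices for the later applications.
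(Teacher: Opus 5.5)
Your proof is correct and follows the paper's argument: the paper also computes \(I(A')-I(A)=2\varepsilon(a-\mu)+1-1/m\) and bounds \(|a-\mu|\le D\). Your extra remarks on the case \(m=1\) and on reading \(D\) as the diameter of \(A'\) are valid but not needed.
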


\begin{proof}
Write \(m=|A|\), let \(a\in A\) be the point that moves, and let \(\mu=\mu(A)\). If \(a\) is replaced by \(a+\varepsilon\), where \(\varepsilon=\pm1\), then
\[
I(A')-I(A)=2\varepsilon(a-\mu)+1-\frac1m .
\]
Since \(|a-\mu|\le D\), the claim follows.
\end{proof}

\begin{lemma}[adding and removing points]\label{lem:add-remove}
Let \(A\) be a finite multiset of diameter \(D\).
\begin{enumerate}
\item[(i)] For any \(b\in\mathbb R\), \(I(A\cup\{b\})\ge I(A)\).
\item[(ii)] If \(b\in A\), then \(I(A)-I(A\setminus\{b\})\le D^2\).
\end{enumerate}
\end{lemma}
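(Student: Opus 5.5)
Both parts follow from a single exact identity for adding one point to a multiset. Let \(A\) have \(m\) elements and mean \(\mu=\mu(A)\), and let \(A^*=A\cup\{b\}\), which has \(m+1\) elements. The plan is to prove
\[
I(A\cup\{b\})=I(A)+\frac{m}{m+1}\,(b-\mu)^2 .
\]
To check this, translate so that \(\mu=0\), which is allowed by translation invariance of \(I\). Then \(\mu(A^*)=b/(m+1)\), and therefore
\[
I(A^*)=\sum_i a_i^2+b^2-\frac{b^2}{m+1}=I(A)+\frac{m}{m+1}\,b^2 .
\]
This is the parallel-axis (Steiner) identity. If \(m=0\), the convention \(I(\emptyset)=0\) makes both sides equal to \(0\).

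\emph{Part (i).} The correction term \(\frac{m}{m+1}(b-\mu)^2\) is nonnegative, so \(I(A\cup\{b\})\ge I(A)\) for every real \(b\).

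\emph{Part (ii).} Apply the identity with \(A\setminus\{b\}\) in place of \(A\). Let \(m'=|A|-1\) and \(\mu'=\mu(A\setminus\{b\})\). This gives
\[
I(A)-I(A\setminus\{b\})=\frac{m'}{m'+1}\,(b-\mu')^2\le (b-\mu')^2 .
\]
The mean \(\mu'\) is an average of elements of \(A\setminus\{b\}\subset A\), so it lies in the convex hull of \(A\). Since \(b\in A\) as well, \(|b-\mu'|\le D\), and hence the difference is at most \(D^2\). When \(m'=0\) the difference is \(0\), so the bound still holds.

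The only step needing care is the identity itself; it is a short computation once one translates to mean zero.
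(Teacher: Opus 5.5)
Your proof is correct and matches the paper's argument: the same parallel-axis identity \(I(A\cup\{b\})=I(A)+\frac{m}{m+1}(b-\mu(A))^2\) gives (i) by nonnegativity, and gives (ii) when applied to \(A\setminus\{b\}\), using that \(b\) and \(\mu(A\setminus\{b\})\) both lie in the convex hull of \(A\). You treat the degenerate cases (\(A\) empty in (i), \(|A|=1\) in (ii)) as the paper does, and you also supply the short mean-zero verification of the identity, which the paper leaves implicit.
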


\begin{proof}
For (i), the case \(A=\emptyset\) is immediate from the convention \(I(\emptyset)=0\). Otherwise, if \(m=|A|\), then
\[
I(A\cup\{b\})=I(A)+\frac{m}{m+1}(b-\mu(A))^2\ge I(A).
\]
For (ii), if \(|A|=1\), then \(D=0\), and the convention \(I(\emptyset)=0\) makes the claim trivial. Otherwise put \(B=A\setminus\{b\}\). Then
\[
I(A)=I(B)+\frac{|B|}{|B|+1}(b-\mu(B))^2.
\]
Both \(b\) and \(\mu(B)\) lie in the convex hull of \(A\), so \(|b-\mu(B)|\le D\).
\end{proof}

\section{The potential and the cubic upper bound}\label{sec:upper}

Fix constants by setting \(c_n=0\) and, for \(2\le H\le n\),
\[
c_{H-1}-c_H=64\left(\frac nH\right)^2 .
\]
Define
\[
G(t)=\sum_{H=2}^n I_H(t)+\sum_{H=1}^n c_HM_H(t)+I_0(t)-128n^2M_0(t).
\]
The terms \(c_HM_H\) reward particles that have fallen to lower levels. The last term rewards the filling of holes, and is used only to control the height-one part of the dynamics.

\begin{lemma}[drift]\label{lem:drift}
There is an absolute constant \(\gamma>0\) such that, whenever the chain has not stabilized and \(M_0(t)>1\),
\[
\E\bigl[G(t+1)-G(t)\mid \eta_t\bigr]\ge \gamma .
\]
\end{lemma}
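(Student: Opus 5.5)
The plan is to write \(\E[G(t+1)-G(t)\mid\eta_t]\) as a weighted sum over all admissible transitions \(x\to y\), each with weight \(\eta(x)/(S|M_x|)\), and to sort the transitions by the height difference. Let \(h=\eta(x)\) and \(h'=\eta(y)<h\). A \emph{lateral} move has \(h'=h-1\). Then \(x\) leaves \(L_h\) and \(y\) enters it. For \(h\ge2\) this moves one point of \(L_h\) by distance one, and every other \(M_H\) and \(I_H\) is unchanged. For \(h=1\) the hole at \(y\) moves to \(x\), so \(M_0\) and the \(M_H\) are unchanged, and only \(I_0\) changes by moving one point. A \emph{drop} has \(h'\le h-2\). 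Then \(M_h\) loses one and \(M_{h'+1}\) gains one, so \(\sum c_HM_H\) increases by \(c_{h'+1}-c_h\ge c_{h-1}-c_h=64(n/h)^2\). If moreover \(h'=0\), we gain an additional \(128n^2\) from \(M_0\) decreasing.

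\textbf{Step 1 (drops are strongly positive).} For a drop I will bound each loss using Lemma~\ref{lem:add-remove}. Removing \(x\) from \(L_h\) costs at most \(D_h^2<4n^2/h^2\) by Corollary~\ref{cor:diameter}. Adding \(y\) to the level sets \(L_{H}\) with \(H\le h'+1\) costs nothing by Lemma~\ref{lem:add-remove}(i). If \(h'=0\), removing a hole costs at most \(n^2\) in \(I_0\), and this is dominated by the \(128n^2\) gain. If \(h'\ge1\), the vertex \(x\) does not become a hole, and \(I_0\) is unchanged. So every drop contributes at least \(60(n/h)^2\ge 60\) times its weight, and this surplus is available to absorb small losses elsewhere.

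\textbf{Step 2 (lateral moves at each level via Lemma~\ref{lem:two-sided}).} I will fix a level \(h\ge2\) and work on the high arc. The leftmost point \(a_p\) and the rightmost point \(a_q\) of \(L_h\) have outer neighbours of height \(<h\). If those neighbours have height exactly \(h-1\), the two outward lateral moves carry equal weights up to the factor \(1/|M|\in\{1/2,1\}\). By Lemma~\ref{lem:two-sided} their symmetric average gains \(k+1-1/m\ge0\). Every other lateral move at level \(h\) goes into an interior gap vertex of height \(h-1\) from one or both sides. Using the explicit increment \(2\varepsilon(a-\mu)+1-1/m\) from the proof of Lemma~\ref{lem:moving-one}, I will sum over all such moves. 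Moves entering a gap from its two sides have opposite \(\varepsilon\) and positions differing by \(2\), so the linear terms nearly cancel and each gap costs at most \(O(1)\) weight. I then want to charge this \(O(1)\) either to the outward extreme gain \(k+1\), since the number of gaps is at most \(k\), or to a drop available at the same level. When an outer neighbour of an extreme point has height \(\le h-2\), the extreme move is itself a drop and Step 1 covers it.

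\textbf{Step 3 (the source of \(\gamma\)).} The same computation for the holes, on the arc cut at a vertex of height \(\ge2\), gives the strict positivity. When \(M_0>1\), the leftmost and rightmost holes each have an outer neighbour of height \(\ge1\). If that neighbour has height \(1\), it pushes the hole outward by a lateral move of weight \(1/S\ge1/n\). Lemma~\ref{lem:two-sided} with \(m=M_0\ge2\) gives an average gain of at least \(k+1-1/m\ge 1/2\). If that neighbour has height \(\ge2\), the move is a drop into the hole, with gain of order \(n^2\) times weight \(\ge 1/n\). In either case the hole extremes alone contribute a positive amount. Here is a caveat: the weights are of order \(1/S\), so this gives a drift of order \(1/n\) rather than a constant. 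I will therefore first check whether the hole term can be made \(\Omega(1)\), for instance by counting all height-one vertices adjacent to holes together with the \(128n^2\) scaling. If it cannot, \(\gamma\) will come from the drops or from the level-\(h\) extreme gains \(k+1\), each weighted by \(h/S\).

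\textbf{Main obstacle.} The main obstacle is Step 2: the bookkeeping of interior gaps of height exactly \(h-1\) inside \(L_h\), whose moves tend to decrease \(I_h\), together with the interaction of the factors \(1/|M_x|\). I would first try to show that the total interior loss at level \(h\) is at most a constant times the number of boundary pairs of \(L_h\). I would then compare that count with the outward gains \(k+1\) and with the large drop surplus \(60(n/h)^2\) from Step 1.
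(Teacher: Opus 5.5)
Your Step~1 is correct, but the proposal stops where the real work of the lemma begins, and the heuristic you suggest for Step~2 does not close.

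By the separation lemma, every interior gap of $L_h$ is a plateau of height exactly $h-1$. Your claim that such a gap costs $O(1)$ holds only when the plateau is a single vertex and both sides carry the same tie-break factor. If a plateau of length $g$ separates a component ending at $r$ from one starting at $l=r+g+1$, the two inward moves contribute $2(r-l)+2-2/m=-2g-2/m$. So the loss is proportional to the gap length, not to the number of gaps. What does work is to group, for each component $[l_j,r_j]$ of $L_h$, its two \emph{outward} endpoint moves. Lemma~\ref{lem:two-sided} holds for any $a_p<a_q$, so each such pair gives $2(r_j-l_j)+2-2/m\ge 0$. Summing over components is the correctly telescoped version of your extremes-plus-gaps count; a one-point component with two shoulders averages to $1-1/m$.

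The more serious omission is the case where the other endpoint of a component of $L_h$ has height $Q>h$. That endpoint makes no lateral move at level $h$. So the shift of the height-$h$ endpoint, which can lower $I_h$ by as much as $2D_h+1$, has nothing at level $h$ to pair with. The paper handles this by a parent--child charge. The higher endpoint's outward move is forced and has weight $Q/S\ge h/S$.
\begin{itemize}
\item If its target has height $j\le h-2$, its mass increment $c_{j+1}-c_Q=\sum_{H=j+2}^{Q}\alpha_H$ contains a copy of $\alpha_h$, which pays for the child via (4.3).
\item If its target has height $h-1$, the target joins $L_h$ beyond the opposite end. Together with the child's shift, this is a two-sided movement with the parent's old point retained, which Lemma~\ref{lem:add-remove}(i) only improves.
\end{itemize}
Nesting of the level sets gives at most one child per parent atom and level, so no $\alpha_H$ is spent twice. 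Your Step~1 keeps only $\alpha_Q$ from such a fall and treats added points as merely harmless. It therefore throws away exactly the resources needed here.

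The second gap is the constant $\gamma$. As you observe, the strict gain from the two extreme holes sits on weight of order $1/S$, so by itself it gives a drift of order $1/n$. In the optional stopping argument of Theorem~\ref{thm:upper}, this yields only $\E\sigma=O(n^4)$. An absolute $\gamma$ requires partitioning all transition atoms into groups, each with conditional average bounded below by an absolute constant. Your plan lacks two of these groups.

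The first is the neutral configuration in which $L_H$ is a single vertex flanked by two shoulders of height $H-1$; its moves change $G$ by exactly $0$. The hypothesis $M_0>1$ excludes this for $H=2$. For $H\ge3$, the paper groups the neutral atom with the two outward boundary atoms of the enclosing component of $L_{H-1}$. These carry weight at least $2(H-1)/S$ against $H/S$ and serve only this one neutral atom.

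The second consists of the interior hole moves, which your Step~3 never treats and each of which can lower $I_0$ by order $n$. Measure weights in units of $1/S$.
\begin{itemize}
\item If some vertex of height at least two can drop into a hole, its hole contribution is at least $127n^2$ on weight at least $1$. This dominates all height-one atoms together, since they have total weight at most $n$ and each changes $I_0$ by at most $2n+1$.
\item Otherwise every hole cluster is bordered by height-one sites. The cluster telescoping (4.7)--(4.8) then gives a total of at least $2M-2$ on weight at most $2M$, hence a conditional average of at least $1/2$ because $M\ge2$.
\end{itemize}
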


\begin{proof}
Fix a state \(\eta\) with \(M_0>1\), and put \(S=S(\eta)\). A transition atom is a pair \(e=(x,y)\), where \(x\in F(\eta)\) is the selected source and \(y\in M_x(\eta)\) is the chosen minimum-height neighbor. Its probability is
\[
\mathbb P(e)=\frac{\eta(x)}{S}\frac1{|M_x(\eta)|}.
\]
We write \(\Delta G(e)\) for the change of \(G\). The proof is an accounting argument for disjoint summands of this change. Set
\[
\Delta_{\mathrm{high}}(e)
=\sum_{H=2}^n \Delta I_H(e)+\sum_{H=1}^n c_H\Delta M_H(e),
\qquad
\Delta_0(e)=\Delta I_0(e)-128n^2\Delta M_0(e).
\]
Then
\[
\Delta G(e)=\Delta_{\mathrm{high}}(e)+\Delta_0(e).
\]
The high-level part pays for all changes of \(I_H\), \(H\ge2\), and of \(\sum c_HM_H\); the hole part is used only for changes of \(I_0\) and \(M_0\). Thus the same transition atom may appear in both discussions, but different summands of \(\Delta G(e)\) are being accounted for.

Put
\[
\alpha_H=c_{H-1}-c_H=64\left(\frac nH\right)^2,
\qquad H\ge2,
\]
and let \(D_H=\diam(L_H)\). By Corollary~\ref{cor:diameter}, \(D_H<2n/H\). Lemmas~\ref{lem:moving-one} and~\ref{lem:add-remove} imply
\[
\text{moving one boundary point of }L_H\text{ can decrease }I_H
\text{ by at most }2D_H+1\le 5\left(\frac nH\right)^2,
\tag{4.1}
\]
and
\[
\text{removing one point from }L_H\text{ can decrease }I_H
\text{ by at most }D_H^2\le 4\left(\frac nH\right)^2 .
\tag{4.2}
\]
Adding a point to a level set never decreases its moment of inertia. Consequently, whenever a fall produces the increment \(\alpha_H\), it has enough surplus to pay for any one possible level-\(H\) loss:
\[
\alpha_H-5\left(\frac nH\right)^2
\ge 59\left(\frac nH\right)^2 .
\tag{4.3}
\]

\emph{High levels.} Fix \(H\ge2\). Decompose \(L_H\) into its components in the high arc. Only endpoints of such components can create a negative contribution to \(I_H\). Moreover, an endpoint of height strictly larger than \(H\) cannot create a negative level-\(H\) contribution: after the move it still belongs to \(L_H\), and the target either joins \(L_H\) or stays below it. Thus every possible loss at level \(H\) comes from an endpoint whose height is exactly \(H\).

Let \(J\) be a component of \(L_H\). If the two endpoints of \(J\) both have height \(H\), we group the two endpoint atoms of \(J\). Their source weights are equal. When both moves are along an \((H-1)\)-plateau, Lemma~\ref{lem:two-sided} gives a positive average change of \(I_H\). When at least one endpoint falls to a neighbor of height at most \(H-2\), the mass term contributes \(\alpha_H\), and the possible loss of \(I_H\) is bounded by (4.1) or (4.2), hence is paid by (4.3). This gives a positive conditional contribution, except for the following neutral case: \(L_H\) has a single point, both its neighbors have height \(H-1\), and the move merely shifts this single level-\(H\) point one step. Then the level-\(H\) contribution is exactly zero.

Now suppose that one endpoint \(x\) of \(J\) has height exactly \(H\), while the other endpoint \(z\) has height \(Q>H\). We call \(z\) the level-\(H\) parent of \(x\). The child atom at \(x\) has probability \(H/S\), since the outside neighbor of \(x\) is the unique minimum-height neighbor. For the parent, the outside neighbor relative to \(J\) has height at most \(H-1\), while the inside neighbor has height at least \(H\). Hence the parent atom that moves outward from \(J\) is also forced by the minimum-neighbor rule, and its probability is \(Q/S\ge H/S\).

If the child falls below level \(H\), its own mass increment \(\alpha_H\) pays for the level-\(H\) loss. If the child only moves the endpoint of \(L_H\) by one step along an \((H-1)\)-plateau, then the parent atom pays for that possible loss. Indeed, if the outside neighbor of the parent has height at most \(H-2\), the parent fall produces \(\alpha_H\), and (4.3) applies. If that outside neighbor has height exactly \(H-1\), the parent move adds a point to \(L_H\) just beyond the opposite endpoint; paired with the possible child move this is the two-sided movement of Lemma~\ref{lem:two-sided}, with the old parent point retained as an additional point. Retaining an additional point can only improve the estimate by Lemma~\ref{lem:add-remove}(i).

These charges are disjoint. A negative level-\(H\) term is attached to one component \(J\) of \(L_H\). If it is paid by a parent fall, only the \(H\)-th increment \(\alpha_H\) in that fall is used. For a fixed parent atom and a fixed level \(H\), the nested structure of level sets on the high arc gives at most one child endpoint at height \(H\); otherwise two distinct children of the same height would be endpoints of the same level interval on the same side of the parent. Therefore no increment \(\alpha_H\) is charged twice. Summing over all \(H\ge2\), the accounted high-level contribution is non-negative, and every non-neutral high-level block has conditional contribution bounded below by a universal constant \(c_1>0\).

It remains in the high-level accounting to handle the neutral one-point shoulder atoms. Such an atom at level \(H\) can occur only when \(L_H\) is a single point with two shoulders of height \(H-1\). Since we are assuming \(M_0>1\), this cannot happen with \(H=2\): one height-two vertex and no higher vertex would force exactly one hole by mass balance. Thus \(H\ge3\). Let \(J_{H-1}\) be the component of \(L_{H-1}\) containing this point and its two shoulders. Because there is a hole outside this component, \(J_{H-1}\) has two boundary endpoints. The outward moves from these endpoints are non-neutral level-\((H-1)\) moves, their total source weight is at least \(2(H-1)\), and the neutral source has weight \(H\). Grouping the neutral atom with these two lower-level boundary atoms therefore loses only an absolute factor. Each lower-level boundary atom is used for at most one such neutral atom, namely the one-point component of the next level inside the same \(L_{H-1}\)-component. After reducing \(c_1\), the high-level accounting still gives non-negative contribution everywhere and a positive conditional contribution on each high-level group not handled by the hole estimate below.

\emph{The hole coordinate.} We next account for \(\Delta_0\). Work in the coordinate system used to define \(I_0\), and let \(M=M_0>1\). For a height-one source, \(M_0\) does not change and exactly one hole moves by one step. Hence Lemma~\ref{lem:moving-one}, with diameter at most \(n\), gives the crude bound
\[
|\Delta_0(e)|=|\Delta I_0(e)|\le 2n+1 .
\tag{4.4}
\]
For a source of height at least two which sends a particle into a hole, one hole is deleted. Lemma~\ref{lem:add-remove}(ii) gives a possible loss of at most \(n^2\) in \(I_0\), while the term \(-128n^2M_0\) increases by \(128n^2\). Thus every such atom has hole contribution at least
\[
127n^2 .
\tag{4.5}
\]
If at least one atom of this type exists, then its atom weight is at least one: the source height is at least two and there are at most two minimum-height neighbors. The total atom weight of all height-one sources is at most \(n\). Combining (4.4) and (4.5), the total unnormalized hole contribution of all height-one atoms together with one high-to-hole atom is at least
\[
127n^2-n(2n+1)>100n^2
\]
for \(n\ge2\). Dividing by the total atom weight of this group gives a positive conditional contribution, uniformly in \(n\).

It remains to consider the case in which no vertex of height at least two can move directly into a hole. Then every hole which has a non-hole neighbor has a height-one neighbor on that side. The height-one atoms are exactly the exclusion-type moves of the holes through height-one sites. Write the hole clusters, in increasing order in the chosen coordinate system, as
\[
[\ell_1,r_1],\ldots,[\ell_k,r_k].
\]
For a hole at position \(p\), let \(\Delta_-(p)\) and \(\Delta_+(p)\) denote the change in \(I_0\) when that hole moves to \(p-1\) and \(p+1\), respectively. If \(\mu\) is the mean of the \(M\) hole positions, then
\[
\Delta_-(p)=-2(p-\mu)+1-\frac1M,
\qquad
\Delta_+(p)= 2(p-\mu)+1-\frac1M .
\tag{4.6}
\]
For a cluster \([\ell_i,r_i]\), the two outward moves contribute
\[
\Delta_-(\ell_i)+\Delta_+(r_i)
=2(r_i-\ell_i)+2-\frac2M .
\tag{4.7}
\]
If the gap between two consecutive clusters has length one, the single height-one site in the gap uses a uniform tie-break, so the two adjacent inward moves are counted with weights \(1/2\) and \(1/2\). If the gap has length at least two, they are counted with weights one. Since the sum of the two inward contributions across a gap is negative, replacing the weights \(1/2\) by weights one can only decrease the total. Therefore the actual total unnormalized contribution of all height-one atoms is at least the sum of the cluster quantities in (4.7):
\[
\begin{aligned}
\text{total hole contribution}
&\ge \sum_{i=1}^k\left(2(r_i-\ell_i)+2-\frac2M\right) \\
&=2M-\frac{2k}{M}
\ge 2M-2 .
\end{aligned}
\tag{4.8}
\]
The total atom weight of these height-one moves is at most \(2k\le2M\). Since \(M>1\), (4.8) gives conditional average at least \((2M-2)/(2M)\ge1/2\). Thus the hole contribution also has a uniformly positive conditional drift in the no-high-to-hole case.

In terms of the decomposition above, the high-level accounting proves
\[
\sum_e \mathbb P(e)\Delta_{\mathrm{high}}(e)\ge 0,
\]
and gives a positive conditional contribution on each non-neutral high-level group after the neutral shoulder atoms have been grouped with their adjacent non-neutral lower-level moves. The hole-coordinate accounting proves that the \(\Delta_0\)-contribution of the height-one atoms, together with the \(\Delta_0\)-part of any high-to-hole atom, is bounded below by a positive absolute constant in conditional average. These statements are about the two disjoint summands \(\Delta_{\mathrm{high}}\) and \(\Delta_0\), so there is no double counting even when the same transition atom affects both. Since the non-trivial transition atoms are exhausted by the high-level groups and the hole-coordinate groups, taking the minimum of the constants in the two estimates yields an absolute \(\gamma>0\) such that
\[
\E[\Delta G\mid \eta]\ge\gamma .
\]
This proves the lemma.
\end{proof}

\begin{theorem}[cubic upper bound]\label{thm:upper}
There exists an absolute constant \(C<\infty\) such that
\[
\E_o \tau_n \le C n^3
\]
for the pile process on \(C_n\) started from \(n\delta_o\).
\end{theorem}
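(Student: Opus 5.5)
Theorem~\ref{thm:upper} will follow from Lemma~\ref{lem:drift} by a standard submartingale/optional-stopping argument. Let \(\sigma\) be the first time at which \(M_0(t)\le 1\) (or the chain stabilizes). By Lemma~\ref{lem:drift}, the process \(G(t\wedge\sigma)-\gamma(t\wedge\sigma)\) is a submartingale. The increments of \(G\) are bounded by a polynomial in \(n\), so optional stopping applies to \(\sigma\wedge T\), and letting \(T\to\infty\) gives
\[
\gamma\,\E_o\sigma\le \sup_t G(t)-G(0).
\]
This yields \(\E_o\sigma\le C n^3\), provided we show that the range of \(G\) along trajectories is \(O(n^3)\).

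\emph{Range of \(G\).} I will bound each term of \(G\) separately.
\begin{itemize}
\item \(I_H\le M_H D_H^2\), and \(D_H<2n/H\) by Corollary~\ref{cor:diameter}. Also \(M_H\le n/H\), since each vertex in \(L_H\) carries at least \(H\) particles. Hence \(\sum_{H\ge2} I_H\le \sum_H 4n^3/H^3=O(n^3)\).
\item \(I_0\le M_0 n^2\le n^3\).
\item \(0\le 128n^2M_0\le128n^3\).
\item \(c_H=\sum_{K>H}64n^2/K^2\le 64n^2/H\), so \(\sum_H c_HM_H\le \sum_H 64n^2/H\cdot n/H=O(n^3)\).
\end{itemize}
Thus \(|G(t)|\le C'n^3\) for all \(t\) and all states, which gives \(\E_o\sigma\le 2C'n^3/\gamma\).

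\emph{The last hole.} It remains to handle the phase starting at \(\sigma\) when \(M_0=1\) but the chain has not stabilized. Since the total mass equals \(n\), in this phase exactly one vertex has height two, one vertex is a hole, and all other vertices have height one. The moves are then explicit. The height-two vertex is always free. If it is adjacent to the hole it fills the hole, and the chain stabilizes. Otherwise its neighbors both have height one, so it sends a particle to a uniform neighbor, which moves the height-two location by one step. The height-one vertices adjacent to the hole push the hole one step. So the configuration is described by the gap \(d\) between the hole and the height-two vertex along either arc. This gap performs a walk on \(\{1,\dots,n-1\}\), absorbed when \(d\in\{1,n-1\}\). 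Each step changes \(d\) by \(\pm1\), and the steps from the doubleton move symmetrically: the moves of the peak are symmetric by the uniform tie-break. I need to check that the probabilities of moving in either direction are each bounded below by an absolute constant. The hole's moves may be biased, but every step changes \(d\) by one.

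The simplest robust bound is to use the potential \(d(n-d)\). Its expected increment is at least an absolute constant whenever the step is symmetric enough; more crudely, \((d-n/2)^2\) has drift bounded below. Optional stopping then gives an expected exit time \(O(n^2)\). Alternatively, \(M_0\) does not increase, and the same bound \(O(n^3)\) follows by reusing the hole term \(I_0\) with a separate one-dimensional argument. Combining the two phases by the strong Markov property gives \(\E_o\tau_n\le Cn^3\).

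The main obstacle is this last-hole phase, because Lemma~\ref{lem:drift} excludes it. The key point I must verify is that the walk of \(d\) has no bias pushing it away from absorption strongly enough to create exponentially long times. Since the hole's move probabilities come from height-one sources with uniform tie-breaks, a direct computation of the transition probabilities in the three-type configuration should settle it. If a bias appears, I would instead fall back to bounding the range of \(G\) restricted to the \(I_0\) term.
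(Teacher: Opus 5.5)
Your proposal matches the paper's proof: the submartingale $G(t\wedge\sigma)-\gamma(t\wedge\sigma)$ from the drift lemma, optional stopping with an $O(n^3)$ bound on the range of $G$ (your bounds $c_H\le 64n^2/H$ and $M_H\le n/H$ work as well as the paper's $c_H\le c_1<64n^2$ with $\sum_H M_H=n$), and an $O(n^2)$ bound for the one-hole phase. The bias you worry about is absent: when the peak and the hole are not adjacent, $S=4$ and the peak and the hole each step left or right with probability $1/4$, so the gap is exactly symmetric; adjacency does not absorb immediately (the peak fills the hole with probability $2/3$, otherwise the hole is pushed away), but $d(n-d)$, set to $0$ on absorption, still has drift at most $-1$ and so gives expected time at most $n^2/4$.
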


\begin{proof}
Let
\[
\sigma=\inf\{t:M_0(t)\le1\}.
\]
By the drift lemma, \(G(t\wedge\sigma)-\gamma(t\wedge\sigma)\) is a submartingale.

We first bound \(G\). By the diameter estimate,
\[
I_H(t)\le M_H(t)\diam(L_H(t))^2
\le \frac nH\left(\frac{2n}{H}\right)^2
=\frac{4n^3}{H^3},
\]
so \(\sum_{H=2}^n I_H(t)\le 16n^3\). Also
\[
c_1=64n^2\sum_{H=2}^n H^{-2}<64n^2,
\]
and \(\sum_{H=1}^n M_H(t)=n\), hence
\[
\sum_{H=1}^n c_HM_H(t)\le 64n^3.
\]
Finally, \(I_0(t)\le n^3\). Thus \(G(t)\le 81n^3\) for all \(t\). On the other hand, \(G(0)\ge -128n^3\).

Applying optional stopping to \(t\wedge\sigma\) and then letting \(t\to\infty\),
\[
\gamma\E\sigma \le 81n^3-G(0)\le 209n^3.
\]
Therefore \(\E\sigma=O(n^3)\).

If \(M_0(\sigma)=0\), the chain has stabilized. If \(M_0(\sigma)=1\), then there is exactly one hole and exactly one vertex of height two. The pair consisting of the unique hole and the unique height-two vertex evolves as a nearest-neighbor birth-death chain on the cycle, with absorption when they meet; its expected absorption time is \(O(n^2)\). Hence
\[
\E\tau_n\le \E\sigma+O(n^2)=O(n^3).
\]
\end{proof}

\section{The current lower bound and simulations}\label{sec:lower}

The elementary deterministic lower bound is quadratic.

\begin{theorem}[transport lower bound]\label{thm:lower}
For the pile process on \(C_n\) started from \(n\delta_o\),
\[
\tau_n\ge \sum_{v\in C_n} d(o,v)=\left\lfloor\frac{n^2}{4}\right\rfloor.
\]
Consequently, \(\E_o\tau_n=\Omega(n^2)\).
\end{theorem}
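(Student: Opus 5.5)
We argue deterministically, pathwise, using a transport potential. For a configuration \(\eta\) put
\[
\Phi(\eta)=\min_{\pi}\sum_{\text{particles }p} d\bigl(\text{position of }p,\pi(p)\bigr),
\]
where \(\pi\) ranges over bijections from the \(n\) particles to the \(n\) vertices of \(C_n\) and \(d\) is graph distance on the cycle. In words, \(\Phi\) is the minimal cost of moving the current configuration to the absorbing state \(\mathbf 1\). It equals \(0\) exactly at the absorbing state, since only then does the identity assignment cost nothing.

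First, a single transition moves one particle by one edge. Given an optimal matching \(\pi\) for \(\eta_t\), the same matching applied to \(\eta_{t+1}\) has cost at most one larger or smaller, by the triangle inequality for that particle's term. Hence
\[
|\Phi(\eta_{t+1})-\Phi(\eta_t)|\le1 .
\]
Since \(\Phi(\eta_{\tau_n})=0\), it follows that \(\tau_n\ge\Phi(\eta_0)\) on every trajectory.

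Second, we compute \(\Phi(\eta_0)\). Initially all \(n\) particles sit at \(o\), so every bijection has the same cost, namely \(\sum_{v}d(o,v)\).

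Third, we evaluate this sum. For \(n=2k+1\) the distances from \(o\) are \(0\), then \(1,\dots,k\) twice each, giving \(k(k+1)=(n^2-1)/4=\lfloor n^2/4\rfloor\). For \(n=2k\) they are \(0\), then \(1,\dots,k-1\) twice each, plus a single \(k\), giving \(k(k-1)+k=k^2=n^2/4\). The bound \(\E_o\tau_n\ge\lfloor n^2/4\rfloor=\Omega(n^2)\) follows by taking expectations of the pathwise inequality.

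Nothing here is a real obstacle. The only point needing care is the one-step Lipschitz property of \(\Phi\), and re-using the old matching settles it. This holds because particles are indistinguishable except through the matching, so we may label the moving particle and keep its matched target.
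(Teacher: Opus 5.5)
Your proposal is correct and is essentially the paper's argument: both bound \(\tau_n\) below by the minimal transport cost from \(n\delta_o\) to the all-ones state, namely \(\sum_v d(o,v)\), and you simply formalize the paper's ``each move carries one particle across one edge'' reasoning as a \(1\)-Lipschitz potential \(\Phi\). One small point: the inequality you actually need is \(\Phi(\eta_t)\le\Phi(\eta_{t+1})+1\), which comes from applying an optimal matching for \(\eta_{t+1}\) to \(\eta_t\); re-using the old matching, as you wrote, gives only the other direction, but the argument is symmetric, so this fix is immediate.
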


\begin{proof}
Each transition moves exactly one particle across one edge. In the absorbing state, one particle must be present at every vertex. Thus the total number of moves is at least the minimum total transportation distance from the initial pile at \(o\) to one particle at each vertex. This minimum is exactly \(\sum_v d(o,v)\), which equals \(n^2/4\) when \(n\) is even and \((n^2-1)/4\) when \(n\) is odd.
\end{proof}

Simulations suggest that the quadratic lower bound is not sharp. We ran direct Monte Carlo simulations of the transition kernel, using the exact particle clock and uniform tie-break described in Section 1. The table reports the sample mean of \(\tau_n\), normalized by \(n^2\) and by \(n^3\), together with the number \(K_{T_2}\) of holes when the maximum height first drops to two. The runs shown below use \(233{,}202\) independent samples in total and about \(3.05\cdot 10^9\) simulated particle moves.

\begin{center}
\small
\begin{tabular}{rrrrr}
\hline
\(n\) & samples & \(\widehat{\E}\tau_n/n^2\) & \(\widehat{\E}\tau_n/n^3\) & \(\widehat{\E}K_{T_2}/n\) \\
\hline
16 & 50000 & 0.621 & 0.03880 & 0.374 \\
24 & 50000 & 0.823 & 0.03431 & 0.367 \\
32 & 50000 & 1.018 & 0.03180 & 0.361 \\
48 & 30000 & 1.407 & 0.02931 & 0.352 \\
64 & 30000 & 1.784 & 0.02788 & 0.348 \\
96 & 15000 & 2.527 & 0.02632 & 0.344 \\
128 & 5000 & 3.261 & 0.02548 & 0.341 \\
192 & 2000 & 4.724 & 0.02460 & 0.339 \\
256 & 1000 & 6.148 & 0.02401 & 0.337 \\
384 & 100 & 9.133 & 0.02378 & 0.335 \\
512 & 50 & 11.774 & 0.02300 & 0.336 \\
768 & 20 & 17.593 & 0.02291 & 0.334 \\
1024 & 32 & 23.300 & 0.02275 & 0.334 \\
\hline
\end{tabular}
\end{center}

The normalization by \(n^2\) grows steadily with \(n\), while the normalization by \(n^3\) appears to settle near a positive constant. The same simulations show that \(K_{T_2}/n\) is close to \(1/3\) for the larger values of \(n\), and that roughly \(84\%\) of the total time is spent after \(T_2\). This supports the view that the maximum-height-two phase is the bottleneck: at that time the state typically looks like a height-one plateau containing many vertices of height two, with an equal number of holes outside the plateau. The height-two vertices then behave like an exclusion process on the plateau until they are pushed into holes; see Liggett~\cite{Liggett1985} for background on exclusion processes.

A matching lower bound would follow from two additional ingredients. Let
\[
T_2=\inf\{t:\max_x\eta_t(x)\le2\},
\qquad K_{T_2}=M_0(T_2)=M_2(T_2).
\]
A matching cubic lower bound would follow from the following two estimates. First, for some constants \(a,p>0\) independent of \(n\),
\[
\mathbb P\{K_{T_2}\ge an\}\ge p.
\]
Second, on the event \(K_{T_2}\ge an\), one would need the conditional height-two estimate
\[
\mathbb E\bigl[\tau_n-T_2\mid \mathcal F_{T_2}\bigr]
\ge cK_{T_2}^3 .
\]
Together these statements would give the conjectural matching bound \(\E\tau_n=\Omega(n^3)\). At present, the unconditional lower bound proved above remains \(\Omega(n^2)\).

\section{Comments on possible extensions}\label{sec:extensions}

The same potential method should be adaptable to other one-dimensional graphs and to other initial configurations with total mass comparable to the number of vertices. Higher-dimensional graphs and trees require new ideas: the separation lemma is special to the cycle, and the one-dimensional moment calculation used above has no immediate two-dimensional analogue.

\end{document}